\makeatletter \@namedef{subjclassname@2010}{
  \textup{2010} Mathematics Subject Classification}
\newtheorem{lem}{Lemma}
\newtheorem{thm}{Theorem}
\newtheorem{pro}{Proposition}
\theoremstyle{remark}
\newtheorem*{rema}{Remark}
\theoremstyle{definition}
\newcommand{\C}{\mathbb{C}}
\begin{document}

\title[Exponentials of Normal Operators]{Exponentials of Normal Operators and Commutativity of Operators: A New Approach.}
\author[M. H. MORTAD]{MOHAMMED HICHEM MORTAD}

\dedicatory{}
\date{}
\thanks{}

\subjclass[2010]{Primary 47A10,
  47A60}

\keywords{Self-adjoint and normal operators. Cramped operators.
Operator exponentials. Commutativity. Spectrum. Berberian theorem.
Fuglede Theorem. Hilbert space.}

\address{
D\'{e}partement de Math\'{e}matiques, Universit\'{e} d'Oran
(Es-Senia), B.P. 1524, El Menouar, Oran 31000, Algeria.\newline {\bf
Mailing address}: \newline Dr Mohammed Hichem Mortad \newline BP
7085 Es-Seddikia\newline Oran \newline 31013 \newline Algeria}

\email{mhmortad@gmail.com, mortad@univ-oran.dz.}

\begin{abstract}
We present a new approach to the question of when the commutativity
of operator exponentials implies that of the operators.  This is
proved in the setting of bounded normal operators on a complex
Hilbert space. The proofs are based on some similarities results by
Berberian and Embry as well as the celebrated Fuglede theorem.
\end{abstract}

\maketitle

\section{Introduction}
Let $A$ and $B$ be two linear bounded operators on a \textit{Banach}
space $H$. The way of defining $e^A$, the exponential of $A$, is
known at an undergraduate level.  The functions $\sinh A$, $\cosh A$
can be defined similarly. It is also an easy exercise to show that
$AB=BA$ implies $e^Ae^B=e^Be^A$.  While the converse is not always
true, it is, however, true under the hypothesis that $A$ and $B$
self-adjoint on a $\C$-Hilbert space. This result is stated in the
following theorem (for the readers convenience, a proof may be found
in \cite{wermuth.1997.pams})
\begin{thm}\label{e^Ae^B=e^Be^A iff A=B A,B self-adjoint}
Let $A$ and $B$ be two self-adjoint operators defined on a Hilbert
space. Then
\[e^Ae^B=e^Be^A \Longleftrightarrow AB=BA.\]
\end{thm}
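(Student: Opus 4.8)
The forward implication is elementary: if $AB=BA$, then the two power series defining $e^A$ and $e^B$ converge absolutely in operator norm and may be multiplied and rearranged freely, giving $e^Ae^B=e^{A+B}=e^Be^A$. All the content therefore lies in the converse, and my plan is to recover $A$ and $B$ from $e^A$ and $e^B$ via a continuous functional calculus and then transport the commutativity of the exponentials back to the operators themselves.

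Write $P=e^A$ and $Q=e^B$. Since $A$ is self-adjoint its spectrum is real, so by the spectral mapping theorem $\sigma(P)=e^{\sigma(A)}$ is a compact subset of $[e^{-\|A\|},e^{\|A\|}]\subset(0,\infty)$; in particular $P$ is a positive, boundedly invertible self-adjoint operator, and likewise $Q$. The decisive point is that $t\mapsto e^t$ is a bijection of $\R$ onto $(0,\infty)$ with continuous inverse $\log$, so $\log$ is continuous on $\sigma(P)$, and the composition rule for the continuous functional calculus of the self-adjoint operator $A$ gives
\[
\log(e^A)=(\log\circ\exp)(A)=A,
\]
and symmetrically $\log(e^B)=B$. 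Thus $A$ and $B$ are genuinely continuous functions of the commuting operators $P$ and $Q$.

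It remains to show that $\log(P)$ and $\log(Q)$ commute, knowing only $PQ=QP$. Here I would invoke the polynomial approximation built into the functional calculus: as $\sigma(P)$ is compact and $\log$ is continuous there, Stone--Weierstrass produces polynomials $p_n$ with $p_n(P)\to\log(P)=A$ in operator norm, and likewise $q_m(Q)\to\log(Q)=B$. Since $P$ commutes with $Q$, each $p_n(P)$ commutes with $Q$, so in the limit $A$ commutes with $Q$; then $A$ commutes with every $q_m(Q)$, and passing to the limit once more yields $AB=BA$, closing the converse.

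The step I expect to be the real obstacle is not the limiting algebra but the indispensable use of self-adjointness: it is precisely what forces $\sigma(A)\subset\R$ and thereby renders $\exp$ injective with inverse $\log$ on the relevant spectrum. For general (non-normal) operators $\exp$ fails to be injective — eigenvalues being determined only modulo $2\pi i$ — so the recovery $A=\log(e^A)$ collapses, which is exactly why the converse can fail outside the self-adjoint (and, more generally, normal) setting and why this hypothesis must be guarded most carefully.
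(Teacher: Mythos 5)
Your proof is correct, but note that the paper itself contains no proof of this theorem: it is quoted as known, with the reader referred to Wermuth's article \cite{wermuth.1997.pams}, so the comparison must be made against that reference rather than against an internal argument. Relative to the Wermuth-style proof, your route is genuinely different in flavor. Wermuth works with the holomorphic (Riesz--Dunford) functional calculus and proves the converse under the weaker hypothesis that the spectra are $2\pi i$-congruence-free, recovering each operator as a holomorphic logarithm of its exponential; you instead exploit self-adjointness to the hilt: positivity and invertibility of $e^A$, the composition rule $\log(e^A)=(\log\circ\exp)(A)=A$ for the continuous functional calculus, and Stone--Weierstrass approximation, so that the commutation $e^Ae^B=e^Be^A$ passes through norm limits of polynomials to give $AB=BA$. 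What your approach buys is elementarity — no contour integrals, only the spectral theorem and the isometric functional calculus $C(\sigma(P))\to B(H)$ — and it is fully rigorous as written (the one step you leave implicit, that the power-series exponential agrees with the functional-calculus exponential, is standard). What it gives up is generality: the argument is confined to self-adjoint (or suitably normal) operators, whereas the congruence-free formulation also covers non-normal ones. Your closing diagnosis is exactly right and worth emphasizing: injectivity of $\exp$ on the spectrum is the crux, its failure modulo $2\pi i$ is what breaks the converse for general operators, and the $2\pi i$-congruence-free hypothesis appearing in the literature cited by this paper is precisely the device that restores it.
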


There have been several attempts to prove the previous theorem for
non self-adjoint operators using the $2\pi i$-congruence-free
hypothesis (see eg.
\cite{Fotios-Plaiogannis,schmoeger1999,schmoeger2000,schmoeger2001,wermuth.1997.pams}).
See also \cite{Bourgeois} for some low dimensions results without
the $2\pi i$-congruence-free hypothesis.

In this paper, we present a different approach to this problem using
results about similarities due to Berberian
\cite{Berberian-adjoint-similar} and also to Embry \cite{Embry
Simil.}.  The main question asked here is under which assumptions we
have

\[e^Ae^B=e^Be^A\Longrightarrow AB=BA\]
for normal $A$ and $B$?

Another result obtained in this article is about sufficient
conditions implying $AB=BC$ given that $e^Ae^B=e^Be^C$, where $A$,
$B$ and $C$ are self-adjoint operators.

All operators considered in this paper are assumed to be bounded and
defined on a separable complex Hilbert space. The notions of normal,
self-adjoint and unitary operators are defined in the usual fashion.
So is the notion of the spectrum (with the usual notation $\sigma$).
It is, however, more convenient to recall the notion of a cramped
operator. A \textit{unitary} operator $U$ is said to be
\textit{cramped} if its spectrum is completely contained on some
open semi-circle (of the unit circle), that is
\[\sigma(U)\subseteq \{e^{i t}:~\alpha<t<\alpha+\pi\}.\]

While we assume the reader is familiar with other notions and
results on bounded operators (some standard references are
\cite{Con,RUD}), we will be recalling on several occasions some
results that might not be known to some readers. One of them is the
following result (first established in \cite{BeckPut}).

\begin{thm}\label{Berberian}[Berberian, \cite{Berberian-adjoint-similar}]
Let $U$ be a cramped operator and let $X$ be a bounded operator such
that $UXU^*=X^*$. Then $X$ is self-adjoint.
\end{thm}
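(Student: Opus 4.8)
The plan is to decompose $X$ into its self-adjoint (Cartesian) parts and to prove that the anti-self-adjoint part vanishes, the crampedness hypothesis entering only at the decisive stage. First I would write $X = A + iB$ with $A = \tfrac12(X+X^*)$ and $B = \tfrac1{2i}(X-X^*)$ self-adjoint, so that $X^* = A - iB$. Substituting into $UXU^* = X^*$ gives $UAU^* + iUBU^* = A - iB$. Since $A,B$ are self-adjoint and $U$ is unitary, $UAU^*$ and $UBU^*$ are again self-adjoint, so the left-hand side is the Cartesian decomposition of $UXU^*$ and the right-hand side that of $X^*$; by uniqueness of this decomposition I match parts to obtain $UAU^* = A$ and $UBU^* = -B$, i.e. $AU = UA$ and $UB = -BU$. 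The whole problem thus reduces to showing $B = 0$.

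Next I would bring in crampedness. As $\sigma(U)$ is compact and contained in the open semicircle $\{e^{it}:\alpha<t<\alpha+\pi\}$, it lies in a closed subarc, so after rotating by the unimodular scalar $\lambda = e^{-i(\alpha+\pi/2)}$ the operator $V = \lambda U$ is unitary with $\sigma(V)\subseteq\{e^{is}:|s|\le \pi/2-\varepsilon\}$ for some $\varepsilon>0$. Setting $P = \tfrac12(V+V^*)$, the continuous functional calculus applied to the function $e^{is}\mapsto\cos s \ge \sin\varepsilon$ on $\sigma(V)$ yields $P \ge (\sin\varepsilon)\,I>0$. The relation $UB=-BU$ is unchanged under multiplication by the scalar $\lambda$, so $VB=-BV$; taking adjoints and using $B=B^*$ gives $V^*B=-BV^*$, and adding the two relations produces $(V+V^*)B=-B(V+V^*)$, that is, $PB=-BP$.

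It then remains to deduce $B=0$ from $PB=-BP$ together with $P>0$. From $PB=-BP$ one computes $P^2B=BP^2$, so $B$ commutes with $P^2$, hence with its positive square root $P=(P^2)^{1/2}$ via the functional calculus; therefore $PB=BP$, which combined with $PB=-BP$ forces $2PB=0$, and the invertibility of $P$ gives $B=0$. Consequently $X=A$ is self-adjoint.

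I expect the crampedness step to be the main obstacle, since everything hinges on being able to rotate $U$ so that its Hermitian part is \emph{strictly} positive; it is exactly this strict positivity that annihilates $B$. Without the hypothesis the relation $UB=-BU$ is consistent with $B\neq 0$, so no purely algebraic manipulation could close the argument, and the functional-calculus passage from $P^2$ to $P$ is the small additional technical point that converts positivity into the conclusion.
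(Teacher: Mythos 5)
Your proof is correct, but note that the paper never proves this statement at all: it is quoted as a known result of Berberian (and Beck--Putnam) and used as a black box, so there is no internal argument to compare against; what you have produced is a genuine, self-contained proof of the cited theorem. Your opening reduction is exactly the classical one: the Cartesian decomposition and its uniqueness give $UAU^*=A$ and $UBU^*=-B$, so everything hinges on killing $B$. Where you diverge from the standard Berberian-style endgame is instructive. The classical route observes that $UBU^*=-B$ implies $U^2B=BU^2$, and since $z\mapsto z^2$ is injective on an open semicircle, crampedness lets one recover $U$ as a continuous function of $U^2$, whence $UB=BU$, hence $UB=0$ and $B=0$. You instead use compactness of $\sigma(U)$ inside the open semicircle to rotate $U$ into a unitary $V=\lambda U$ whose real part $P=\frac{1}{2}(V+V^*)$ satisfies $P\ge(\sin\varepsilon)I>0$, transport the anticommutation to $PB=-BP$, and then run the square-root trick on $P^2$ rather than on $U^2$: $B$ commutes with $P^2$, hence with $(P^2)^{1/2}=P$, forcing $2PB=0$ and $B=0$ by invertibility of $P$. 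The two mechanisms are the same at heart (anticommutation plus commutation with a square forces vanishing), but yours trades the slightly delicate choice of a holomorphic branch inverting $z\mapsto z^2$ on an arc for the more elementary facts that a positive operator has a unique positive square root and that this square root is approximable by polynomials; every individual step you invoke (uniqueness of the Cartesian decomposition, $\sigma(\lambda U)=\lambda\,\sigma(U)$, the lower bound $\cos s\ge\sin\varepsilon$ on the rotated spectrum, commutation passing to square roots) is sound. Your closing observation that crampedness is indispensable is also correct: for $U=\mathrm{diag}(i,-i)$, whose spectrum sits at the endpoints of a closed semicircle, and $X$ skew-adjoint with zero diagonal, one has $UXU^*=X^*$ with $X\ne X^*$, so the strict containment in an open semicircle is exactly what your strict positivity of $P$ encodes.
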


\section{Main Results}

The following lemma is fundamental to prove our results. Its proof
follows from the holomorphic functional calculus.

\begin{lem}\label{Lemma A^iB^i=B^iA^i}
Let $A$ and $B$ be two commuting normal operators, on a Hilbert
space, having spectra contained in simply connected regions not
containing 0 . Then
\[A^iB^i=B^iA^i\]
where $i=\sqrt{-1}$ is the usual complex number.
\end{lem}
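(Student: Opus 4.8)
The plan is to construct $A^i$ and $B^i$ through the holomorphic functional calculus and then push the commutativity of $A$ and $B$ through the defining contour integrals. First I would fix, for each operator, a simply connected open set $\Omega_A \supseteq \sigma(A)$ (and $\Omega_B \supseteq \sigma(B)$) avoiding $0$; because $\Omega_A$ is simply connected and $0 \notin \Omega_A$, the function $\lambda \mapsto 1/\lambda$ admits a holomorphic primitive there, so a single-valued holomorphic branch of the logarithm exists on $\Omega_A$. With this branch, $\lambda \mapsto \lambda^i := e^{i\log\lambda}$ is holomorphic on $\Omega_A$, and I define $A^i$ by the functional calculus through the Cauchy integral $A^i = \frac{1}{2\pi i}\int_{\Gamma_A}\lambda^i(\lambda - A)^{-1}\,d\lambda$, where $\Gamma_A$ is a contour in $\Omega_A \setminus \sigma(A)$ surrounding $\sigma(A)$ once; likewise for $B^i$.

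The central observation is that commutativity survives the functional calculus. I would first record the elementary fact that any bounded operator $S$ commuting with $A$ also commutes with each resolvent: from $SA = AS$ one gets $S(\lambda - A) = (\lambda - A)S$, and multiplying by $(\lambda - A)^{-1}$ on both sides yields $(\lambda - A)^{-1}S = S(\lambda - A)^{-1}$ for every $\lambda$ in the resolvent set of $A$. Inserting this identity into the Cauchy integral shows that $S$ commutes with $f(A)$ for every $f$ holomorphic on a neighbourhood of $\sigma(A)$; in particular $S$ commutes with $A^i$.

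With this tool the conclusion is immediate. Applying the observation with $S = B$ and using $AB = BA$ gives $BA^i = A^i B$, so $A^i$ commutes with $B$. Applying it a second time with $S = A^i$, now acting on the functional calculus of $B$, yields $A^i B^i = B^i A^i$, which is the claim.

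I do not anticipate a serious obstacle. The only points needing care are the existence of a single-valued holomorphic branch of the logarithm, which is precisely where simple connectedness and the exclusion of $0$ are used, and the routine justification for passing $S$ through the norm-convergent Cauchy integral. It is worth noting that normality of $A$ and $B$ plays no role in this particular argument; it is inherited from the ambient setting, and the proof applies to any commuting pair of bounded operators whose spectra lie in such regions.
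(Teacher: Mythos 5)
Your proof is correct and follows exactly the route the paper intends: the paper's entire ``proof'' of this lemma is the one-line assertion that it follows from the holomorphic functional calculus, and your argument (branch of $\log$ from simple connectedness and $0\notin\Omega$, commutation with resolvents, passing through the Cauchy integral, then applying the observation twice) supplies precisely the details being invoked. Your closing remark that normality is never used is also accurate---the statement holds for any commuting pair of bounded operators with such spectra.
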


\begin{lem}\label{e^S^i=e^iS}
Let $A$ be a self-adjoint operator such that $\sigma(A)\subset
(0,\pi)$. Then
\[(e^{iA})^i=e^{-A}.\]
\end{lem}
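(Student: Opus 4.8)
The plan is to realize both sides through the functional calculus for the self-adjoint operator $A$ and to reduce the identity to a scalar computation on the spectrum. Since $A$ is self-adjoint, the spectral theorem furnishes a projection-valued measure $E$ supported on $\sigma(A)\subset(0,\pi)$ with $A=\int t\,dE(t)$, and hence $e^{iA}=\int e^{it}\,dE(t)$. In particular $U:=e^{iA}$ is unitary and, by the spectral mapping theorem, $\sigma(U)=\{e^{it}:t\in\sigma(A)\}\subseteq\{e^{it}:0<t<\pi\}$, so the spectrum of $U$ lies on the open upper semicircle.

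The first step is to pin down the meaning of $U^i$. One defines $U^i=e^{i\log U}$ through the holomorphic functional calculus, so everything hinges on choosing a branch of $\log$ that is holomorphic on a neighborhood of $\sigma(U)$. Because $\sigma(U)$ avoids both $0$ and the negative real axis, the principal branch is available, and on it $\log(e^{it})=it$ precisely when $t\in(-\pi,\pi)$. This is exactly where the hypothesis $\sigma(A)\subset(0,\pi)$ enters: it guarantees $t\in(0,\pi)\subset(-\pi,\pi)$, so the principal logarithm undoes the exponential without any $2\pi i$ ambiguity and $\log U=iA$ results.

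With the branch fixed, the conclusion follows from a direct functional-calculus computation. Applying the function $z\mapsto e^{i\log z}$ to $U=\int e^{it}\,dE(t)$ gives
\[
U^i=\int \big(e^{it}\big)^i\,dE(t)=\int e^{i\log(e^{it})}\,dE(t)=\int e^{i\cdot it}\,dE(t)=\int e^{-t}\,dE(t)=e^{-A},
\]
which is the desired identity.

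I expect the only genuine obstacle to be the bookkeeping around the branch of the logarithm: one must verify that $\sigma(U)$ really is confined to the open upper semicircle, so that the principal branch is holomorphic there and acts as $e^{it}\mapsto it$, and that no value of $t\in\sigma(A)$ reaches the endpoints $0$ or $\pi$, where the identification would break down. Everything else is a routine application of the spectral theorem.
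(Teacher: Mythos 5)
Your proof is correct and follows the same route the paper intends: the paper's proof of this lemma is the single line ``the proof follows from the functional calculus,'' and your argument simply supplies the details --- spectral measure of $A$, spectral mapping to place $\sigma(e^{iA})$ in the open upper semicircle, the principal branch of $\log$ acting as $e^{it}\mapsto it$ for $t\in(0,\pi)$, and the scalar identity $e^{i\cdot it}=e^{-t}$. In particular, your care with the branch of the logarithm is exactly the point the hypothesis $\sigma(A)\subset(0,\pi)$ is there to handle.
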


\begin{proof}
The proof follows from the functional calculus.
\end{proof}

Before stating and proving the main theorem, we first give an
intermediate result.

\begin{pro}\label{e^Se^N S self-adjoint, N normal}
Let $N$ be a normal operator with cartesian decomposition $A+iB$.
Let $S$ be a self-adjoint operator.  If $\sigma(B)\subset (0,\pi)$,
then
\[e^Se^N=e^Ne^S\Longrightarrow SN=NS.\]
\end{pro}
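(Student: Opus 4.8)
The plan is to reduce the conclusion $SN=NS$ to showing that the self-adjoint operator $S$ commutes separately with the real and imaginary parts $A$ and $B$ of $N$. Since $N=A+iB$ this is equivalent to $SN=NS$ (and, by the Fuglede theorem, automatically forces commutation with $N^*$ as well). Because $N$ is normal, $A$ and $B$ commute, so the exponential factors as $e^N=e^Ae^{iB}=e^{iB}e^A$ through the positive operator $e^A$ and the unitary $e^{iB}$, and likewise $e^{N^*}=e^Ae^{-iB}$.

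First I would exploit the hypothesis together with its adjoint. Taking adjoints in $e^Se^N=e^Ne^S$ and using $(e^S)^*=e^S$ gives $e^Se^{N^*}=e^{N^*}e^S$. Hence $e^S$ commutes with both $e^N$ and $e^{N^*}$, and therefore with the products $e^Ne^{N^*}=e^{2A}$ and $e^Ne^{-N^*}=e^{2iB}$, the cross terms cancelling precisely because $A$ and $B$ commute. In this way the problem splits into a positive part (commutation of $e^S$ with the positive operator $e^{2A}$) and a unitary part (commutation of $e^S$ with the unitary $e^{2iB}$).

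For the positive part, $e^{2A}$ and $e^S$ are commuting self-adjoint operators with spectra in $(0,\infty)$, where the real logarithm is continuous; hence $A=\tfrac12\log(e^{2A})$ and $S=\log(e^S)$ lie in the commutative algebras generated by $e^{2A}$ and $e^S$, and the continuous functional calculus yields $SA=AS$. The unitary part is where the hypothesis $\sigma(B)\subset(0,\pi)$ is essential. The spectrum of $e^{2iB}$ is $\{e^{2it}:t\in\sigma(B)\}$, and on the compact set $\sigma(B)\subset(0,\pi)$ the map $t\mapsto e^{2it}$ is injective with continuous inverse, its image staying bounded away from the branch point $1$. Consequently $B$ is recovered as $B=g(e^{2iB})$ for some $g$ continuous on $\sigma(e^{2iB})$ (this is the concrete content of $e^{iB}$ being cramped). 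Since $e^{2iB}$ is normal and commutes with $e^S$, the Fuglede theorem gives that $e^S$ commutes with $(e^{2iB})^*$ as well, hence with the entire $C^*$-algebra generated by $e^{2iB}$, and in particular with $B=g(e^{2iB})$; passing once more through $S=\log(e^S)$ yields $SB=BS$.

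Combining the two parts, $S$ commutes with $A+iB=N$, which is the desired conclusion. The step I expect to be the main obstacle is the unitary part: one must justify carefully that $B$ is a genuine continuous function of $e^{2iB}$, i.e.\ that no spectral information about $B$ is lost upon exponentiating. This is exactly what $\sigma(B)\subset(0,\pi)$ secures, since $t\mapsto e^{2it}$ is injective only on intervals of length at most $\pi$ (its period); without such a confinement injectivity fails and the recovery of $B$ collapses. The role of the Fuglede theorem here is the bridge from bare commutation with $e^{2iB}$ to commutation with the generated $C^*$-algebra, which is what lets the functional calculus deliver commutation with $B$ itself.
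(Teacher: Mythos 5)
Your proposal is correct, but it takes a genuinely different route from the paper's. The paper rewrites the hypothesis as $e^Se^Ae^{iB}=e^{iB}(e^Se^A)^*$, invokes Berberian's theorem on cramped unitaries (Theorem \ref{Berberian}) to conclude that $e^Se^A$ is self-adjoint, and then appeals twice to Theorem \ref{e^Ae^B=e^Be^A iff A=B A,B self-adjoint}: once to get $AS=SA$, and once to get $BS=SB$ after converting $e^Se^{iB}=e^{iB}e^S$ into $e^Se^{-B}=e^{-B}e^S$ by means of Lemmas \ref{Lemma A^iB^i=B^iA^i} and \ref{e^S^i=e^iS}. You use none of these ingredients: taking adjoints of the hypothesis (legitimate because $e^S$ is self-adjoint) yields commutation with $e^{N^*}$; the products $e^Ne^{N^*}=e^{2A}$ and $e^Ne^{-N^*}=e^{2iB}$ then split the problem into a positive part and a unitary part; and you recover $A$, $B$ and $S$ as continuous functions of their own exponentials ($\log$ on the positive side, the inverse of $t\mapsto e^{2it}$ on the unitary side, injective precisely because $\sigma(B)\subset(0,\pi)$), pushing commutation through the continuous functional calculus via polynomial approximation. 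What your route buys is self-containedness: it reproves, rather than cites, the special case of Wermuth's theorem that it needs, and it makes explicit exactly where the spectral hypothesis enters; what the paper's route buys is what the article advertises, namely a demonstration that the similarity theorems of Berberian and Embry can drive such commutativity results. One simplification available to you: the appeal to Fuglede's theorem is unnecessary at this stage, since $e^{2iB}$ is unitary and an operator commuting with a unitary automatically commutes with its adjoint, which is its inverse; Fuglede is genuinely needed only at the level of Theorem \ref{e^Me^N N,M normal operators}, where neither exponential is self-adjoint or unitary.
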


\begin{proof}
Let $N=A+iB$ where $A$ and $B$ are two commuting self-adjoint
operators.  Hence $e^Ae^{iB}=e^{iB}e^A$. Consequently,
\begin{align*}
e^Se^N=e^Ne^S&\Longleftrightarrow e^Se^Ae^{iB}=e^Ae^{iB}e^S\\
&\Longleftrightarrow e^Se^Ae^{iB}=e^{iB}e^Ae^S\\
&\Longleftrightarrow e^Se^Ae^{iB}=e^{iB}(e^Se^A)^*
\end{align*}
Since $B$ is self-adjoint, $e^{iB}$ is unitary. It is also cramped
by the spectral hypothesis on $B$. Now, Theorem \ref{Berberian}
implies that $e^Se^A$ is self-adjoint, i.e.
\[e^Se^A=e^Ae^S.\]
Theorem \ref{e^Ae^B=e^Be^A iff A=B A,B self-adjoint} then gives us
$AS=SA$.

It only remains to show that $BS=SB$. Since $e^Se^A=e^Ae^S$, we
immediately obtain
\[e^Se^N=e^Ne^S \Longrightarrow e^Se^Ae^{iB}=e^Ae^{iB}e^S \text{ or } e^Ae^Se^{iB}=e^Ae^{iB}e^S\]
and so
\[e^Se^{iB}=e^{iB}e^S\]
by the invertibility of $e^A$.

Using Lemmas \ref{e^Ae^B=e^Be^A iff A=B A,B self-adjoint} \&
\ref{e^S^i=e^iS} we immediately see that
\[e^Se^{-B}=e^{-B}e^S.\]
Theorem \ref{e^Ae^B=e^Be^A iff A=B A,B self-adjoint} yields $BS=SB$
and thus
\[SN=S(A+iB)=(A+iB)S=NS.\]
The proof of the proposition is complete.
\end{proof}

\begin{rema}We could have bypassed Berberian's result by alternatively using
some of Embry's results (see \cite{Embry Simil.}).

Similar results can be obtained too by a result due to I. H. Sheth
\cite{Sheth-Hyponormal}. They all more or less deduce the
self-adjointness of an operator $N$ from the operational equation
$AN=N^*A$ (under obviously extra conditions on $N$ and/or on $A$).

In the proof of the previous proposition, $N$ is a product of
self-adjoint operators. Then a result by the author (which appeared
in \cite{MHM1}) can be applied too.
\end{rema}

Now, we state and prove the main theorem in this paper, namely
\begin{thm}\label{e^Me^N N,M normal operators}
Let $N$ and $M$ be two normal operators with cartesian
decompositions $A+iB$ and $C+iD$ respectively. If
$\sigma(B),\sigma(D)\subset (0,\pi)$, then
\[e^Me^N=e^Ne^M\Longrightarrow MN=NM.\]
\end{thm}

\begin{proof}

We have

\[e^Me^N=e^Ne^M \Longrightarrow e^{M^*}e^N=e^{N}e^{M^*}\]
by the Fuglede theorem since $e^M$ is normal. Hence by using again
the normality of $M$
\[ e^{M^*}e^Me^N=e^{M^*}e^Ne^M\Longrightarrow e^{M^*}e^Me^N=e^Ne^{M^*}e^M\]

or

\[e^{M^*+M}e^N=e^Ne^{M^*+M}\]

Since $M^*+M$ is self-adjoint, Proposition \ref{e^Se^N S
self-adjoint, N normal} applies and gives

\[(M^*+M)N=N(M^*+M) \text{ or just } CN=NC.\]

The previous implies that $N^*C=CN^*$ and thus $(N+N^*)C=C(N+N^*)$.
Therefore, we have

\[AC=CA \text{ and hence } BC=CB.\]

Doing the same work for $N$ in lieu of $M$, very similar arguments
and Proposition \ref{e^Se^N S self-adjoint, N normal} all yield
\[AM=MA \text{ and hence } AD=DA.\]

To prove the remaining bit, we go back to the equation
$e^Ne^M=e^Me^N$. Then by the commutativity of $B$ and $C$ and by
that of $A$ and $D$, we obtain
\[e^Ae^{iB}e^Ce^{iD}=e^Ce^{iD}e^Ae^{iB}\Longleftrightarrow e^Ae^Ce^{iB}e^{iD}=e^Ce^Ae^{iD}e^{iB}.\]
Since $A$ and $C$ commute and since $e^Ae^C$ is invertible, we are
left with
\[e^{iB}e^{iD}=e^{iD}e^{iB}.\]
Lemmas \ref{Lemma A^iB^i=B^iA^i} \& \ref{e^S^i=e^iS} yield
\[e^{-B}e^{-D}=e^{-D}e^{-B}\]
which leads to $BD=DB$. Hence $BM=MB$.

Finally, we have
\[NM=(A+iB)M=AM+iBM=MA+iMB=M(A+iB)=MN,\]
completing the proof.
\end{proof}

We finish this paper by a result on self-adjoint operators which
generalizes Theorem \ref{e^Ae^B=e^Be^A iff A=B A,B self-adjoint} to
the case of three operators. We have
\begin{thm}
Let $\mathcal{H}$ be a $\C$-Hilbert space.  Let $A$, $B$ and $C$ be
all self-adjoint operators on $\mathcal{H}$.  If
\[\left\{\begin{array}{c}
    \cosh Ae^B=e^B\cosh A, \\
     \sinh A e^C=e^B\sinh A,\\
     e^C\cosh A=\cosh Ae^C,\\
e^C\sinh A=\sinh Ae^B,
  \end{array}\right.
\]
then
\[AC=BA\]
\end{thm}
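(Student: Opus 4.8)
The plan is to strip the four exponential identities down to purely linear relations, and then to exploit the fact that $A$ is an \emph{odd} function of $\sinh A$ in order to recover $A$ together with the correct sign information.

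First I would upgrade each hypothesis from an exponential identity to a linear one by functional calculus. Writing $S=\sinh A$, the relation $S e^{C}=e^{B}S$ iterates to $S(e^{C})^{n}=(e^{B})^{n}S$ for every $n\in\Z$, hence $S\,p(e^{C})=p(e^{B})\,S$ for every polynomial $p$. Since $e^{B}$ and $e^{C}$ are positive with spectra contained in a compact subset of $(0,\infty)$, Stone--Weierstrass allows me to approximate $\log$ uniformly on $\sigma(e^{B})\cup\sigma(e^{C})$ by polynomials; passing to the norm limit and using $\log e^{B}=B$, $\log e^{C}=C$ yields $SC=BS$. The same device applied to the two $\cosh$ identities (now with $e^{B}$, respectively $e^{C}$, on both sides) gives $[\cosh A,B]=0$ and $[\cosh A,C]=0$; these last two could equally be obtained from Theorem \ref{e^Ae^B=e^Be^A iff A=B A,B self-adjoint} after writing $\cosh A=e^{\log\cosh A}$. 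Thus the hypotheses reduce to
\[ SC=BS,\qquad CS=SB,\qquad [\cosh A,B]=0,\qquad [\cosh A,C]=0. \]
Since $S^{2}=\cosh^{2}A-I$, the last two commutations give at once $[S^{2},B]=[S^{2},C]=0$.

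The decisive step is to recover $A$ from $S=\sinh A$. Because $\sinh\colon\R\to\R$ is a strictly increasing odd bijection, the functional calculus gives $A=\operatorname{arcsinh}(S)$, and since $\operatorname{arcsinh}$ is odd I may write $\operatorname{arcsinh}(x)=x\,\phi(x^{2})$ with $\phi$ continuous on $[0,\infty)$ (namely $\phi(u)=\operatorname{arcsinh}(\sqrt{u})/\sqrt{u}$, extended by $\phi(0)=1$). Hence $A=S\,\phi(S^{2})=\phi(S^{2})\,S$, and because $[S^{2},C]=0$ the operator $\phi(S^{2})$ commutes with $C$. Therefore
\[ AC=S\,\phi(S^{2})\,C=S\,C\,\phi(S^{2})=(BS)\,\phi(S^{2})=B\,\bigl(S\,\phi(S^{2})\bigr)=BA, \]
which is exactly the asserted identity.

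I expect the genuine obstacle to be conceptual rather than computational. The two $\cosh$ hypotheses constrain only the \emph{even} part of $A$ (equivalently $|A|$, via $\cosh A=\cosh|A|$) and so cannot by themselves detect the sign of $A$; what transports the sign correctly is the intertwining $SC=BS$ for the \emph{odd} function $\sinh A$, together with the observation that $A$ is itself an odd function of $\sinh A$. The other delicate point is the passage from the exponential identities to the linear ones, since Theorem \ref{e^Ae^B=e^Be^A iff A=B A,B self-adjoint} only addresses the genuinely commuting case $e^{B}S=Se^{B}$ and not the true intertwining $e^{B}S=Se^{C}$; it is there that the logarithm (through the Stone--Weierstrass approximation on the compact positive spectra) does the real work.
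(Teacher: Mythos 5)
Your proof is correct, but it takes a genuinely different route from the paper's. The paper's argument is a block-matrix reduction: on $\mathcal{H}\oplus\mathcal{H}$ it forms the self-adjoint operators $\tilde{A}=\left(\begin{smallmatrix} 0 & A\\ A & 0\end{smallmatrix}\right)$ and $\tilde{B}=\left(\begin{smallmatrix} B & 0\\ 0 & C\end{smallmatrix}\right)$, computes $e^{\tilde{A}}=\left(\begin{smallmatrix} \cosh A & \sinh A\\ \sinh A & \cosh A\end{smallmatrix}\right)$ and $e^{\tilde{B}}=\left(\begin{smallmatrix} e^{B} & 0\\ 0 & e^{C}\end{smallmatrix}\right)$, observes that the four hypotheses say exactly that $e^{\tilde{A}}e^{\tilde{B}}=e^{\tilde{B}}e^{\tilde{A}}$, applies Theorem \ref{e^Ae^B=e^Be^A iff A=B A,B self-adjoint} to conclude $\tilde{A}\tilde{B}=\tilde{B}\tilde{A}$, and reads off $AC=BA$ (together with the companion identity $AB=CA$) from the off-diagonal entries. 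You never form this dilation; instead you prove, by iterating the intertwining relation and approximating $\log$ by polynomials on the compact positive spectra, an intertwining generalization of Theorem \ref{e^Ae^B=e^Be^A iff A=B A,B self-adjoint} (namely $Te^{C}=e^{B}T\Rightarrow TC=BT$ for self-adjoint $B,C$ and bounded $T$), which the paper never states, and you then reconstruct $A$ from $S=\sinh A$ via the odd-function identity $A=\operatorname{arcsinh}(S)=S\,\phi(S^{2})$, using $[S^{2},C]=0$ to slide $\phi(S^{2})$ past $C$. All of your steps are sound: $S(e^{C})^{n}=(e^{B})^{n}S$ by induction, $p_{n}(e^{C})\to C$ and $p_{n}(e^{B})\to B$ in norm whenever $p_{n}\to\log$ uniformly on $\sigma(e^{B})\cup\sigma(e^{C})$, $\phi$ is continuous at $0$, and $\phi(S^{2})$ is a norm limit of polynomials in $S^{2}$, hence commutes with $C$. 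What the paper's route buys is brevity, using Theorem \ref{e^Ae^B=e^Be^A iff A=B A,B self-adjoint} as a black box, plus the extra identity $AB=CA$; what your route buys is self-containedness (modulo the continuous functional calculus) and a sharper statement: $AC=BA$ already follows from the hypotheses $\sinh A\, e^{C}=e^{B}\sinh A$ and $e^{C}\cosh A=\cosh A\, e^{C}$ alone — indeed from the two $\sinh$ hypotheses alone, since $CS^{2}=(SB)S=S(SC)=S^{2}C$ — a refinement that the block-matrix argument, which consumes all four hypotheses at once, cannot detect.
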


\begin{proof}
Define on $\mathcal{H}\oplus \mathcal{H}$ the operators
\[\tilde{A}=\left(
              \begin{array}{cc}
                0 & A \\
                A & 0 \\
              \end{array}
            \right)
\text{ and } \tilde{B}=\left(
                         \begin{array}{cc}
                           B & 0 \\
                           0 & C \\
                         \end{array}
                       \right).
\]
One has
\[\tilde{A}^2=\left(
              \begin{array}{cc}
                A^2 & 0 \\
                0 & A^2 \\
              \end{array}
            \right),~
            \tilde{A}^3=\left(
              \begin{array}{cc}
                0 & A^3 \\
                A^3 & 0 \\
              \end{array}
            \right),\cdots\]
Hence
\[e^{\tilde{A}}=\left(
                  \begin{array}{cc}
                   I+\frac{A^2}{2!}+\frac{A^4}{4!}+\cdots  & A+\frac{A^3}{3!}+\frac{A^5}{5!}+\cdots  \\
                   A+\frac{A^3}{3!}+\frac{A^5}{5!}+\cdots  & I+\frac{A^2}{2!}+\frac{A^4}{4!}+\cdots \\
                  \end{array}
                \right)
=\left(
   \begin{array}{cc}
     \cosh A & \sinh A \\
     \sinh A & \cosh A \\
   \end{array}
 \right).
\]
Similarly, we can find that
\[e^{\tilde{B}}=\left(
                  \begin{array}{cc}
                    e^B & 0 \\
                    0 & e^C \\
                  \end{array}
                \right) .\] The hypotheses of the theorem guarantee
that $e^{\tilde{A}}e^{\tilde{B}}=e^{\tilde{B}}e^{\tilde{A}}$ and
since $\tilde{A}$ and $\tilde{B}$ are both self-adjoint, Theorem
\ref{e^Ae^B=e^Be^A iff A=B A,B self-adjoint} then implies that
$\tilde{A}\tilde{B}=\tilde{B}\tilde{A}$.

Examining the entries of the matrices $\tilde{A}\tilde{B}$ and
$\tilde{B}\tilde{A}$, we see that $AC=BA$, establishing the result
\end{proof}

\section*{Acknowledgement}
The author wishes to thank the referee for his/her remarks.

\bibliographystyle{amsplain}

\end{document}